\newcommand{\levy}{L\'{e}vy }
\newcommand{\p}{{\mathbb P}}
\newcommand{\e}{{{\mathbb E}}}
\newcommand{\D}{\mathrm d}
\renewcommand{\a}{{\alpha}}
\renewcommand{\b}{{\beta}}
\renewcommand{\i}{{{\rm i}}}
\newcommand{\R}{{\mathbb R}}
\newcommand{\ok}{{\overline \kappa}}
\newcommand{\uk}{{\underline \kappa}}
\newtheorem{prop}{Proposition}
\newtheorem{remark}{Remark}
\begin{document}
\title{Transient analysis of a stationary L\'evy-driven queue}
\author{Jevgenijs Ivanovs$^\dagger$\, and Michel Mandjes$^{\bullet,\star}$}
\maketitle

\begin{abstract} \noindent In this paper we study a queue with \levy input, without imposing any {\it a priori} assumption on the jumps being one-sided. The focus is on computing the transforms of all sorts of quantities related to the transient workload, assuming the workload is in stationarity at time $0$.
The results are simple expressions that are in terms of the bivariate Laplace exponents of ladder processes.
 In particular, we derive the transform of the minimum workload attained over an exponentially distributed interval. 

\vspace{3mm}

\noindent {\bf Keywords.} Queues $\star$ \levy processes $\star$ conditioned to stay positive

\vspace{2mm}

\begin{itemize}
\item[$^\dagger$] University of Lausanne, Quartier UNIL-Dorigny
B\^{a}timent Extranef,
1015 Lausanne, Switzerland. 
\item[$^\bullet$] Korteweg-de Vries Institute for Mathematics,
University of Amsterdam, Science Park 904, 1098 XH Amsterdam, the Netherlands. 
\item[$^\star$] CWI, P.O. Box 94079, 1090 GB Amsterdam, the Netherlands.
\end{itemize}

\noindent
J.\ Ivanovs is supported by the Swiss National Science Foundation Project 200020\_143889.

\noindent 
M.\ Mandjes is also with  E{\sc urandom}, Eindhoven University of Technology, Eindhoven, the Netherlands, 
and IBIS, Faculty of Economics and Business, University of Amsterdam,
Amsterdam, the Netherlands. M. Mandjes' research is partly funded by the NWO Gravitation project NETWORKS, grant number 024.002.003.

\end{abstract}

\section{Introduction}
In this short communication we focus on the analysis of various random quantities related to the transient of a 
L{\'{e}}vy-driven queue. 
Throughout, we denote the driving  \levy process by $X_t,\,t\geq 0$, uniquely characterized through its \levy exponent 
\[\psi(\theta)={\rm Log}\,{\mathbb E} e^{{\i}\theta X_1},\quad\theta\in\R.\]
As usual, the corresponding {\it queueing process} (or: workload process) $Q_t,\,t\geq 0$ is defined as the solution to the Skorokhod problem \cite[Ch.\ II]{DM}, i.e.,  
\begin{equation}\label{eq:Q}
Q_t=Q_0+X_t-(\underline X_t+Q_0)\wedge 0=Q_0\vee (-\underline X_t)+X_t, \quad t\geq 0,
\end{equation}
where $\underline X_t:=\inf_{s\in[0,t]}X_s$ is the {\it running minimum}
and $Q_0$ is assumed to be independent of the evolution of the driving process~$X_t$ for $t\ge 0$; similarly we let $\overline X_t:=\sup_{s\in[0,t]}X_s$.

It is well known that $Q_t$ converges to a stationary distribution as $t\to\infty$ if and only if $X_t$ has a negative drift, that is, 
$\lim_{t\rightarrow\infty} X_t=-\infty$ a.s.; {\it we therefore assume that this condition is in place throughout this work}.
If $\e X_1$ is well-defined, then this condition is equivalent to requiring that $\e X_1\in [-\infty,0)$, see~\cite[Thm.\ 7.2]{KYP}.
It is also a standard result that the stationary distribution of $Q_t$ coincides with the distribution of $\overline X:=\overline X_\infty$, the {\it overall supremum}; this is a property that is often attributed to Reich \cite[Eqn.\ (2.5)]{DM}.

Importantly, in the  sequel we systematically assume that $Q_0$ {\it has the stationary distribution}; as will become clear, this assumption plays a crucial role in the analysis.
As mentioned above, the primary objective of this note is to study various transient metrics related to the process~$Q_t$.  
All our results are in terms of transforms of the quantities of interest, in addition transformed with respect to time.
In this respect, it is recalled that taking transforms with respect to time essentially amounts to considering $t=e_q$,
where $e_q$ is an exponential variable with rate $q$ (i.e., with mean $q^{-1}$), sampled independently of everything else.

This note is organized as follows. In Section 2 we sketch preliminaries, related to splitting at extrema and the Wiener-Hopf factorization. In Section 3 we focus on the distribution of the minimum workload attained over an exponentially distributed amount of time. Section 4 provides transforms of some other quantities of interest, distinguishing between two cases: (i)~the ongoing busy period and (ii)~the finished initial busy period, in which case the focus is on the so-called unused capacity. Finally, Section~5 focuses on the minimal workload in a queue conditioned to be positive. We conclude the paper by mentioning some challenging open problems.

\section{Preliminaries}\label{sec:prelim}
Let us state some well-known facts about splitting and the Wiener-Hopf factorization, all of which can be found in e.g.\ \cite[Ch.\ VI]{bertoin}.
Consider the process $X$ on the interval $[0,e_q]$ and let
\[\underline G_{e_q}:=\sup\{t\in[0,e_q]: X_t\wedge X_{t-}=\underline X_{e_q}\}\]
be the (last) time of the minimum. 
It is well known that the process $X$ {\it splits at its minimum}, see~\cite[Lem.\ VI.6]{bertoin}. This result can be stated in a simple form when considering the process $X'_t:=X_t$ for all $t\neq \underline G_{e_q}$ and $X'_{\underline G_{e_q}}:=\underline X_{e_q}$.  Namely, the processes
\[\{X'_t,t\in[0,\underline G_{e_q}]\}\:\text{ and }\:\{X'_{t+\underline G_{e_q}}-X'_{\underline G_{e_q}}, t\in[0,e_q-\underline G_{e_q}]\}\]
are independent (note that $X'$ and $X$ differ only if $X$ jumps up at $\underline G_{e_q}$ --- this jump should be included in the right process, but not in the left).
In particular, the above implies  that $\underline X_{e_q}$ and $X_{e_q}-\underline X_{e_q}$ are independent.
Splitting in the queueing context used earlier; see e.g.\  \cite{DDR}.

Let us define the following two functions for $\a,\b\in\mathbb C$ with $\Re(\a),\Re(\b)\geq 0$:
\begin{align}
\label{eq:k}
&\ok(\a,\b)=\exp\left(\int_0^\infty\int_{(0,\infty)}(e^{-t}-e^{-\a t-\b x})\frac{1}{t}\p(X_t\in \D x)\D t\right),\nonumber\\
&\uk(\a,\b)=\exp\left(\int_0^\infty\int_{(-\infty,0]}(e^{-t}-e^{-\a t-\b x})\frac{1}{t}\p(X_t\in \D x)\D t\right),
\end{align}
which are the Laplace exponents of the so-called (strictly) ascending and (weakly) descending bivariate ladder processes, respectively.
In general these functions are defined up to a multiplicative constant; the trivial scaling chosen in~\eqref{eq:k} implies that 
\begin{equation}\label{eq:kk}
\ok(\a,0)\uk(\a,0)=\exp\left(\int_0^\infty\int_{(0,\infty)}(e^{-t}-e^{-\a t})\frac{1}{t}\D t\right)=\a, \quad\a> 0,
\end{equation}
by virtue of the Frullani integral identity.
This particular scaling simplifies the calculations in the proofs, but it does not affect the results of the paper.
The Laplace exponents can then be used to express the so-called {\it Wiener-Hopf factors}:
\begin{align}\label{eq:inf}
\e e^{-\a \underline G_{e_q}+\b\underline X_{e_q}}&=\frac{\uk(q,0)}{\uk(q+\a,\b)},\\
\label{eq:sup}
\e e^{-\a (e_q-\underline G_{e_q})-\b(X_{e_q}-\underline X_{e_q})}=\e e^{-\a \overline G_{e_q}-\b\overline X_{e_q}}&=\frac{\ok(q,0)}{\ok(q+\a,\b)},
\end{align}
where $\overline G_{e_q}:=\inf\{t\in[0,e_q]: X_t\vee X_{t-}=\overline X_{e_q}\}$ is the (first) time of the maximum.

Finally, for $\b\in\mathbb R$ we have
\[\frac{q}{q-\psi(\b)}=\e e^{i\b X_{e_q}}=\e e^{i\b\underline X_{e_q}}\e e^{i\b(X_{e_q}-\underline X_{e_q})}=\frac{\uk(q,0)}{\uk(q,i\b)}\frac{\ok(q,0)}{\ok(q,-i\b)},\]
which according to~\eqref{eq:kk} yields
\begin{equation}\label{eq:WH}q-\psi(\b)=\uk(q,i\b)\ok(q,-i\b).\end{equation} 

Let us finally remark that one has to distinguish between first and last extrema 
only in the case when $X$ is a compound Poisson process; the same is true also for open and closed integration sets in~\eqref{eq:k}. 

\section{Minimum workload}
In \cite{DKM} the distribution of the minimum workload during $[0,e_q)$ was characterized for the case the driving \levy process is spectrally one-sided; it was assumed that the workload is in stationarity at time~0. In this section, this result is extended to the spectrally two-sided case. Several ramifications of this result are presented as well.

Throughout we let $\underline Q_t$ be the running minimum of the workload, i.e., $\underline Q_t$ is the minimum of $Q_s$ over $s\in[0,t]$, and we assume that $Q_0$ has the stationary workload distribution. 
\subsection{Transform of minimum workload}
This subsection shows how to express the transform of $\underline Q_t$ in terms of the Wiener-Hopf factors.
Observe that 
\begin{equation}\label{eq:Qinf}
\underline Q_t=(Q_0+\underline X_t)\vee 0=\underline X_t+Q_0\vee(-\underline X_t)
\end{equation} 
which follows by considering the event $Q_0>-\underline X_t$ and its complement separately.
To this end, we first rewrite the definition of $Q_t$, as given in~\eqref{eq:Q}, in terms of $X_t-\underline X_t$ and $\underline Q_t$:
\[Q_t=(X_t-\underline X_t)+\underline X_t+Q_0\vee(-\underline X_t)=(X_t-\underline X_t)+\underline Q_t,\]
which is also easily seen from Figure~\ref{fig:pics}.
This means that we in particular  have
\[Q_{e_q}=(X_{e_q}-\underline X_{e_q})+\underline Q_{e_q},\]
where the two terms on the right are independent, because of~\eqref{eq:Qinf} and splitting at the infimum, i.e.\ $X_{e_q}-\underline X_{e_q}$ and $\underline X_{e_q}$ are independent, see Section~\ref{sec:prelim}.
Using~\eqref{eq:sup}  we thus obtain the identity 
\begin{equation}
\label{eq:main}
\,e^{-\theta Q_{e_q}}=\e e^{-\theta (X_{e_q}-\underline X_{e_q})}\,\e e^{-\theta \underline Q_{e_q}}=
\e e^{-\theta \overline X_{e_q}}\,\e e^{-\theta \underline Q_{e_q}}=
\frac{\ok(q,0)}{\ok(q,\theta)}\,\e e^{-\theta\underline Q_{e_q}}
\end{equation} 
for $\theta\geq 0$.
Since $Q_0$ has the stationary workload distribution,   so does $Q_{e_q}.$
As a consequence, $Q_{e_q}$ has the distribution of $\overline X$, which yields 
the following result, see also~\eqref{eq:sup}.
\begin{prop}
For any $\theta,q\ge 0$,
\begin{equation}\label{eq:principal}
\e e^{-\theta\underline Q_{e_q}}=\frac{\ok(0,0)}{\ok(0,\theta)}\frac{\ok(q,\theta)}{\ok(q,0)}.
\end{equation}
\end{prop}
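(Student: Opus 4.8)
The plan is to combine the identity in~\eqref{eq:main} with the stationarity assumption on $Q_0$. First I would observe that the whole argument up to and including~\eqref{eq:main} has already done the structural work: splitting at the infimum gives the independence of $X_{e_q}-\underline X_{e_q}$ and $\underline Q_{e_q}$ (the latter is a measurable function of $\underline X_{e_q}$ and $Q_0$ by~\eqref{eq:Qinf}, and $Q_0$ is independent of $X$), and the decomposition $Q_{e_q}=(X_{e_q}-\underline X_{e_q})+\underline Q_{e_q}$ then factorizes the Laplace transform. So the remaining step is purely to insert the value of $\e e^{-\theta Q_{e_q}}$.

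Next I would note that, since $Q_0$ has the stationary workload distribution and stationarity is preserved by the queueing dynamics, $Q_{e_q}$ is again stationary, hence distributed as $\overline X=\overline X_\infty$. Its Laplace transform is obtained by letting $q\downarrow 0$ in~\eqref{eq:sup} with $\a=0$, namely $\e e^{-\theta\overline X}=\ok(0,0)/\ok(0,\theta)$ (this is the usual generalized Pollaczek--Khinchine / Wiener--Hopf expression for the stationary workload). Therefore the left-hand side of~\eqref{eq:main} equals $\ok(0,0)/\ok(0,\theta)$.

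Finally I would solve~\eqref{eq:main} for $\e e^{-\theta\underline Q_{e_q}}$, dividing by the factor $\ok(q,0)/\ok(q,\theta)$, which is strictly positive for $\theta,q\ge 0$, to obtain
\[
\e e^{-\theta\underline Q_{e_q}}=\frac{\ok(0,0)}{\ok(0,\theta)}\,\frac{\ok(q,\theta)}{\ok(q,0)},
\]
which is exactly~\eqref{eq:principal}. One should check the boundary cases: at $\theta=0$ both sides equal $1$, and at $q=0$ (so $e_q=\infty$) the right-hand side collapses to $\ok(0,0)/\ok(0,\theta)=\e e^{-\theta\overline X}$, consistent with $\underline Q_\infty$ having the stationary distribution since $\lim_{t\to\infty}X_t=-\infty$ forces the queue to return to its stationary regime infinitely often and the running minimum of a stationary queue is again stationary; this sanity check is reassuring but not logically needed.

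I do not expect any serious obstacle: the only point requiring a word of care is the justification that $Q_{e_q}$ is stationary, which rests on the fact that $e_q$ is independent of the driving process and that feeding the stationary initial condition $Q_0\stackrel{d}{=}\overline X$ into~\eqref{eq:Q} yields $Q_t\stackrel{d}{=}\overline X$ for every fixed $t$, hence also for the independent random time $e_q$. Everything else is the division performed above. If I wanted to be maximally self-contained I would also spell out why $\e e^{-\theta\overline X}=\ok(0,0)/\ok(0,\theta)$, but since~\eqref{eq:sup} is available from the preliminaries this is immediate by continuity in $q$ at $q=0$.
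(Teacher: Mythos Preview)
Your main argument is correct and is exactly the paper's approach: combine~\eqref{eq:main} with the stationarity of $Q_{e_q}$ (hence $\e e^{-\theta Q_{e_q}}=\ok(0,0)/\ok(0,\theta)$ from~\eqref{eq:sup}) and solve for $\e e^{-\theta\underline Q_{e_q}}$.

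One small correction to your aside: your $q=0$ sanity check is miscomputed. At $q=0$ the right-hand side of~\eqref{eq:principal} is $\dfrac{\ok(0,0)}{\ok(0,\theta)}\dfrac{\ok(0,\theta)}{\ok(0,0)}=1$, not $\ok(0,0)/\ok(0,\theta)$, and indeed $\underline Q_\infty=0$ a.s.\ (since $X_t\to-\infty$ forces $Q_t$ to hit $0$), so $\e e^{-\theta\underline Q_\infty}=1$. Your claim that ``$\underline Q_\infty$ has the stationary distribution'' is false; the running minimum of a stationary process is not stationary. As you note yourself, this check is not part of the proof, so the slip does not affect its validity.
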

It is noted that Equation (\ref{eq:principal}) generalizes the findings of \cite{DKM} which just cover the spectrally one-sided cases; it is readily verified that plugging in expressions for  $\ok$ for the one-sided cases the formulas obtained are in accordance with those derived in \cite{DKM}.

\begin{remark}
The transform of $\underline Q_{e_q}$ has a simple explicit form also in the case when $Q_0$ is exponential (with mean  $\lambda^{-1}$):
\begin{align*}
\e \,e^{-\theta\underline Q_{e_q}}&=\p\left(\underline Q_{e_q}<e_{\theta}\right)=\p\left(Q_0+\underline X_{e_q}<e_{\theta}\right)=\p(Q_0<e_\theta-\underline X_{e_q})=1-\e\, e^{-\lambda(e_\theta-\underline X_{e_q})}\\
&=1-\frac{\theta}{\lambda+\theta}\frac{\uk(q,0)}{\uk(q,\lambda)}
\end{align*}
and then~\eqref{eq:main} yields $\e e^{-\theta Q_{e_q}}$.
Throughout this work we assume that $Q_0$ has the stationary distribution, but virtually all results carry over to the case of the exponential initial distribution. 
\end{remark}

We conclude this section by deriving a related, elegant identity. To this end, we define by
\[\tau:=\inf\{t\geq 0:Q_t\wedge Q_{t-}= 0\}=\inf\{t\geq 0:Q_t=0\}\text{ a.s.,}\]
the time when the system becomes empty for the first time; $\tau$ is also referred to  as the {\it residual busy period} as seen from time 0, and  the equality follows from~\cite[Prop.\ VI.4]{bertoin}.
 Clearly, $\p(\tau=e_q)=0$ and as a consequence\[\p\left(\underline Q_{e_q}=0\right)=\p(\tau<e_q).\]
Taking $\theta\rightarrow\infty$ in~\eqref{eq:principal}, and noting that $\ok(q,\theta)/\ok(0,\theta)\rightarrow 1$ which follows from~\eqref{eq:k}, we obtain
\begin{equation}\label{eq:busy}
\e \,e^{-q\tau}=\p\left(\tau<e_q\right)=\p\left(\underline Q_{e_q}=0\right)=\frac{\ok(0,0)}{\ok(q,0)}=\e \,e^{-q \overline G}
\end{equation} 
 showing that
$\tau$ and $\overline G:=\overline G_\infty$, the (first) time of the overall maxima, have the same distribution (which was concluded, using another line of argumentation, in \cite{DDR} as well).

\section{More refined quantities} In this section we analyze a few more refined quantities that are related to the minimum workload.
In this respect observe that
\[\underline G_{e_q}=\sup\left\{t\in[0,e_q]:Q_t\wedge Q_{t-}=\underline Q_{e_q}\right\}\]
is also the (last) time of the minimal workload, see Figure~\ref{fig:pics}.
 In the sequel we find it convenient to distinguish between the following two cases: $\tau>e_q$ and $\tau<e_q$,
i.e., if the residual busy period is still going on at time $e_q$, or not.
In the second case we look at the first and the last times when the workload attains the value~0,
i.e.\ the time $\tau$ when the first busy period finishes and the time $\underline G_{e_q}$ when the last busy period starts. 
\begin{figure}[h!]
\begin{subfigure}{.5\textwidth}
  \centering
  \includegraphics[width=.8\linewidth]{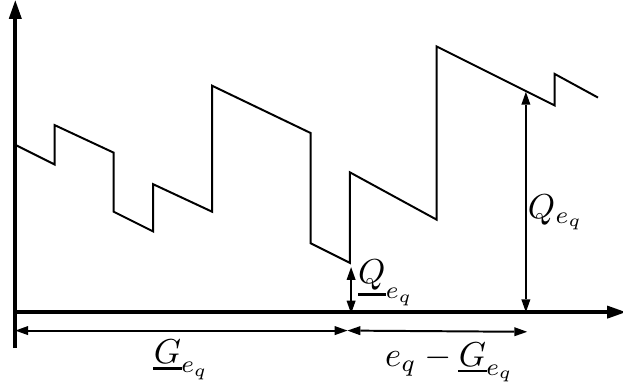}
\caption{Ongoing busy period: $\tau>e_q$}
\label{fig:ongoingBP}
\end{subfigure}%
\begin{subfigure}{0.5\textwidth}
  \centering
\includegraphics[width=.8\linewidth]{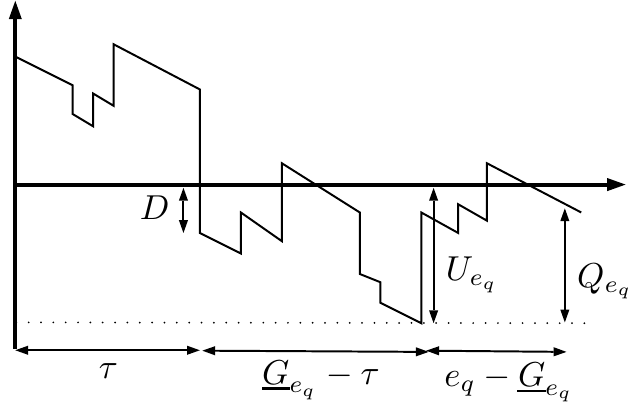}
\caption{Finished busy period: $\tau<e_q$}
\label{fig:interruptedBP}
\end{subfigure}
\caption{Schematic queueing process}
\label{fig:pics}
\end{figure}

\subsection{Ongoing busy period: $\tau>e_q$}
Note that \eqref{eq:principal} combined with~\eqref{eq:busy} provides us with the equality
\begin{eqnarray}\e \left(e^{-\theta\underline Q_{e_q}};\tau>e_q\right)&=&\e e^{-\theta\underline Q_{e_q}}-\p(\tau<e_q)\nonumber\\
&=&
\frac{\ok(0,0)}{\ok(0,\theta)}\frac{\ok(q,\theta)}{\ok(q,0)}-\frac{\ok(0,0)}{\ok(q,0)}
=\frac{\ok(0,0)(\ok(q,\theta)-\ok(0,\theta))}{\ok(0,\theta)\ok(q,0)},
\label{eq:minQ}
\end{eqnarray}
describing the distribution of $\underline Q_{e_q}$ on the event $\tau> e_q.$
The following result gives a simple expression of the joint transform of various quantities related to the minimal workload in an ongoing busy period.
\begin{prop} For any $\theta, \alpha,\beta,\gamma, q\ge 0$,
\[
\e \left(e^{-\theta\underline Q_{e_q}-\a Q_{e_q}-\b \underline G_{e_q}-\gamma(e_q-\underline G_{e_q})};\tau>e_q\right)=\frac{q}{\b+q}\frac{\ok(0,0)(\ok(q+\b,\theta+\a)-\ok(0,\theta+\a))}{\ok(0,\theta+\a)\ok(q+\gamma,\a)}.\]
\end{prop}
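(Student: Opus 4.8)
The plan is to peel off --- via splitting at the minimum of $X$ (Section~\ref{sec:prelim}) and~\eqref{eq:sup} --- everything that happens after that minimum, and then to evaluate the one remaining unconditional transform by exploiting that the workload is stationary at \emph{every} time, not only at time $0$. Concretely, I would start from $Q_{e_q}=(X_{e_q}-\underline X_{e_q})+\underline Q_{e_q}$, from $\underline Q_{e_q}=(Q_0+\underline X_{e_q})\vee 0$, from $\{\tau>e_q\}=\{\underline Q_{e_q}>0\}$ (up to a null event), and from the fact that $\underline G_{e_q}$ is the last time $X$ attains its infimum on $[0,e_q]$. Then the exponent in the statement rewrites as
\[-\bigl((\theta+\a)\underline Q_{e_q}+\b\underline G_{e_q}\bigr)\;-\;\bigl(\a(X_{e_q}-\underline X_{e_q})+\gamma(e_q-\underline G_{e_q})\bigr),\]
in which the first bracket, together with $\1{\tau>e_q}$, is a function of $Q_0$ and of the pre-minimum part of $X$, while the second bracket is a function of the post-minimum part. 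Since $Q_0$ is independent of $X$ and, by splitting at the minimum, the pre- and post-minimum parts of $X$ are independent, the expectation factorizes, and~\eqref{eq:sup} turns the post-minimum factor into $\ok(q,0)/\ok(q+\gamma,\a)$. Writing $\mu:=\theta+\a$, the task then reduces to showing
\[\e\bigl(e^{-\mu\underline Q_{e_q}-\b\underline G_{e_q}};\,\tau>e_q\bigr)=\frac{q}{\b+q}\cdot\frac{\ok(0,0)\bigl(\ok(q+\b,\mu)-\ok(0,\mu)\bigr)}{\ok(0,\mu)\,\ok(q,0)}.\]

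Next I would discard the restriction $\{\tau>e_q\}$. Put $D(\mu):=\e\bigl(e^{-\mu\underline Q_{e_q}-\b\underline G_{e_q}}\bigr)$. On $\{\tau<e_q\}$ one has $\underline Q_{e_q}=0$, so $\e\bigl(e^{-\mu\underline Q_{e_q}-\b\underline G_{e_q}};\tau<e_q\bigr)=\e\bigl(e^{-\b\underline G_{e_q}};\tau<e_q\bigr)$ does not depend on $\mu$, whereas $\e\bigl(e^{-\mu\underline Q_{e_q}-\b\underline G_{e_q}};\tau>e_q\bigr)\to0$ as $\mu\to\infty$. Consequently $\e\bigl(e^{-\mu\underline Q_{e_q}-\b\underline G_{e_q}};\tau>e_q\bigr)=D(\mu)-\lim_{\nu\to\infty}D(\nu)$, and the whole problem collapses to an explicit formula for $D(\mu)$.

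The one genuine idea enters in computing $D(\mu)$: I would give $\underline G_{e_q}$ and $e_q-\underline G_{e_q}$ the \emph{same} coefficient $\b$, so that they recombine into $\b e_q$. Repeating the factorization behind~\eqref{eq:main} --- split $Q_{e_q}=(X_{e_q}-\underline X_{e_q})+\underline Q_{e_q}$ and $e_q=\underline G_{e_q}+(e_q-\underline G_{e_q})$ and use independence of the pre- and post-minimum parts --- one obtains
\[\e\bigl(e^{-\mu Q_{e_q}-\b e_q}\bigr)=\e\bigl(e^{-\mu(X_{e_q}-\underline X_{e_q})-\b(e_q-\underline G_{e_q})}\bigr)\cdot\e\bigl(e^{-\mu\underline Q_{e_q}-\b\underline G_{e_q}}\bigr)=\frac{\ok(q,0)}{\ok(q+\b,\mu)}\,D(\mu)\]
by~\eqref{eq:sup}. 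On the other hand, since $Q_0$ is stationary, $Q_t$ has the law of $\overline X$ for every $t$, so
\[\e\bigl(e^{-\mu Q_{e_q}-\b e_q}\bigr)=\int_0^\infty q\,e^{-(q+\b)t}\,\e\,e^{-\mu Q_t}\,\D t=\frac{q}{q+\b}\,\e\,e^{-\mu\overline X}=\frac{q}{q+\b}\cdot\frac{\ok(0,0)}{\ok(0,\mu)},\]
the last equality being the one already used in~\eqref{eq:principal}. Equating the two displays gives $D(\mu)=\frac{q}{q+\b}\cdot\frac{\ok(0,0)\,\ok(q+\b,\mu)}{\ok(0,\mu)\,\ok(q,0)}$, and then, letting $\mu\to\infty$ and using $\ok(q+\b,\mu)/\ok(0,\mu)\to1$ (precisely the limit used to obtain~\eqref{eq:busy}), $\lim_{\nu\to\infty}D(\nu)=\frac{q}{q+\b}\cdot\frac{\ok(0,0)}{\ok(q,0)}$. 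Subtracting as in the previous step, and reinstating the factor $\ok(q,0)/\ok(q+\gamma,\a)$ from the first step, reproduces exactly the asserted expression.

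The main obstacle is the trick in the third step: one has to notice that matching the two time-exponents collapses $\underline G_{e_q}$ and $e_q-\underline G_{e_q}$ into a single $\b e_q$, after which the stationarity of the workload at the exponential epoch reduces $D(\mu)$ to known quantities. The first step is only bookkeeping, the one subtlety being that $\{\tau>e_q\}$ must be recognized as measurable with respect to $Q_0$ and the pre-minimum part of $X$ alone; the second step is the familiar $\theta\to\infty$ manoeuvre, already applied to~\eqref{eq:principal} in the derivation of~\eqref{eq:busy}.
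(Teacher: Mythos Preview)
Your argument is correct. The ingredients are the same as in the paper --- splitting at the minimum, stationarity of the workload, the density change behind~\eqref{eq:trick}, and the limit $\ok(q',\mu)/\ok(0,\mu)\to 1$ --- but you assemble them in a different order. The paper first restricts to $\{\tau>e_q\}$ by quoting~\eqref{eq:minQ}, then multiplies in the post-minimum factor $\e e^{-\b(e_q-\underline G_{e_q})}$ via splitting, and finally invokes~\eqref{eq:trick} with a parameter swap $q\leftrightarrow q+\b$, $\b\leftrightarrow -\b$ to convert the $(e_q-\underline G_{e_q})$-transform into a $\underline G_{e_q}$-transform; a second splitting step then adds $\a$ and $\gamma$. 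You instead peel off the post-minimum piece first, then compute the \emph{unconditional} transform $D(\mu)=\e e^{-\mu\underline Q_{e_q}-\b\underline G_{e_q}}$ directly by equalizing the coefficients of $\underline G_{e_q}$ and $e_q-\underline G_{e_q}$ so that they recombine into $\b e_q$, which lets stationarity of $Q_t$ for every $t$ do the work; only afterwards do you restrict to $\{\tau>e_q\}$ via $D(\mu)-D(\infty)$. Your route avoids the parameter-swap manoeuvre and the reliance on the previously established~\eqref{eq:minQ}, effectively reproving a $\b$-augmented version of~\eqref{eq:main}--\eqref{eq:principal} in one stroke; the paper's route is more incremental, building on the identities already in hand. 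Both are equally short.
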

\begin{proof}
By appealing to the usual splitting argument, we can write
\begin{eqnarray*}
\e \left(e^{-\theta\underline Q_{e_q}-\b (e_q-\underline G_{e_q})};\tau>e_q\right)
&=&\e \left(e^{-\theta\underline Q_{e_q}};\tau>e_q\right)\e e^{-\beta(e_q-\underline G_{e_q})}\\
&=&\e \left(e^{-\theta\underline Q_{e_q}};\tau>e_q\right)\frac{\ok(q,0)}{\ok(q+\b,0)}=
\frac{\ok(0,0)(\ok(q,\theta)-\ok(0,\theta))}{\ok(0,\theta)\ok(q+\b,0)},
\end{eqnarray*}
see~\eqref{eq:sup} and~\eqref{eq:minQ}.
Now realize that
\begin{equation}\label{eq:trick}\e\left(f(e_q)e^{-\b e_q}\right)=\frac{q}{\b+q}\e f(e_{q+\b})\end{equation}
for any Borel function $f$.
It therefore immediately follows that
\[\e \left(e^{-\theta\underline Q_{e_q}-\b (e_q-\underline G_{e_q})};\tau>e_q\right)=
\frac{q}{\b+q}\e \left(e^{-\theta\underline Q_{e_{q+\b}}+\b T_{e_{q+\b}}};\tau>e_{q+\b}\right).\]
Replacing consistently $q+\beta$ by $q$,  and $\b$ by $-\b$, we thus obtain
\[\e \left(e^{-\theta\underline Q_{e_q}-\b \underline G_{e_q}};\tau>e_q\right)=
\frac{q}{\b+q}\frac{\ok(0,0)(\ok(q+\b,\theta)-\ok(0,\theta))}{\ok(0,\theta)\ok(q,0)}.\]
Finally, again relying on the splitting argument, we arrive at
\begin{eqnarray*}\lefteqn{\e \left(e^{-\theta\underline Q_{e_q}-\a Q_{e_q}-\b \underline G_{e_q}-\gamma(e_q-\underline G_{e_q})};\tau>e_q\right)}\\
&=&\e \left(e^{-(\a+\theta)\underline Q_{e_q}-\b \underline G_{e_q}};\tau>e_q\right)\e \left(e^{-\a(X_{e_q}-\underline X_{e_q})-\gamma(e_q-\underline G_{e_q})}\right)\\
&=&\frac{q}{\b+q}\frac{\ok(0,0)(\ok(q+\b,\theta+\a)-\ok(0,\theta+\a))}{\ok(0,\theta+\a)\ok(q,0)}\frac{\ok(q,0)}{\ok(q+\gamma,\a)},\end{eqnarray*}
see~\eqref{eq:sup}, completing the proof.
\end{proof}


\subsection{Finished busy period: $\tau<e_q$}
In the case $\tau<e_q$ we define two random quantities:
\begin{align*}
&U_{e_q}:=-(Q_0+\underline X_{e_q}), &D:=-(Q_0+X_\tau),
\end{align*}
see Figure~\ref{fig:pics}(b).
The first can be interpreted as the {\it unused capacity},
and the second as the sudden unused capacity at the end of the initial busy period. The latter quantity may be of limited interest, but it can be included in the joint transform without any additional work.
First we start with a basic result complementing~\eqref{eq:minQ}.
\begin{prop}\label{prop:U} For any $\theta,q\ge 0$,
\[\e\left(e^{-\theta U_{e_q}};\tau<e_q\right)=\frac{\ok(0,0)}{\ok(q,0)}\frac{\uk(q,\theta)-\uk(0,\theta)}{\uk(q,\theta)}.\]
\end{prop}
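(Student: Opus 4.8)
The event $\tau < e_q$ coincides with $\{\underline Q_{e_q} = 0\}$, i.e., with $\{Q_0 + \underline X_{e_q} \le 0\}$, which in turn is the event $\{Q_0 \le -\underline X_{e_q}\}$. On this event, $U_{e_q} = -(Q_0 + \underline X_{e_q}) = (-\underline X_{e_q}) - Q_0 \ge 0$. The plan is therefore to compute $\e(e^{-\theta U_{e_q}}; Q_0 \le -\underline X_{e_q})$ directly, exploiting that $Q_0$ has the stationary distribution (so $Q_0 \stackrel{d}{=} \overline X$) and that, by splitting at the infimum, $\underline X_{e_q}$ is independent of $Q_0$.

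First I would condition on $\underline X_{e_q} = -a$ for $a \ge 0$ and write the inner expectation as $\e(e^{-\theta(a - Q_0)}; Q_0 \le a) = e^{-\theta a}\,\e(e^{\theta Q_0}; Q_0 \le a)$. The catch is that $e^{\theta Q_0}$ with a \emph{positive} exponent is not directly a transform; instead I would use the representation $\e(e^{\theta \overline X}; \overline X \le a) = \p(\overline X \le a) - \theta \int_0^a e^{\theta y}\,\p(\overline X \le y)\,\D y$ via integration by parts, or — cleaner — observe that $-\underline X_{e_q}$ itself has, by \eqref{eq:inf} with $\alpha = 0$, the transform $\e e^{-\beta(-\underline X_{e_q})} = \uk(q,0)/\uk(q,\beta)$, so that the pair $(Q_0, -\underline X_{e_q})$ can be handled through a double Laplace transform. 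Concretely, $U_{e_q} = (-\underline X_{e_q} - Q_0)\vee 0$ together with the independence means $\e(e^{-\theta U_{e_q}}; \tau < e_q)$ is the convolution-type quantity $\e\big((e^{-\theta(-\underline X_{e_q})} e^{\theta Q_0}) \wedge 1 \cdot \1{-\underline X_{e_q} \ge Q_0}\big)$; I would compute this by writing $\1{y \ge Q_0} e^{-\theta(y - Q_0)} = \theta \int_{Q_0}^{y} e^{-\theta(y-z)}\,\D z \cdot \1{y\ge Q_0} + \1{y \ge Q_0}$ is awkward, so the genuinely efficient route is: for independent nonnegative $A := -\underline X_{e_q}$ and $B := Q_0$, $\e(e^{-\theta(A-B)}; A \ge B) = \e\big(e^{-\theta A}\,\e(e^{\theta B}\1{B \le A})\big)$, and since $B \stackrel{d}{=} \overline X$ has transform $\ok(0,0)/\ok(0,\cdot)$, I would express $\e(e^{\theta B}; B \le A)$ through the known stationary transform and the fact that $A$ has a completely monotone-type law. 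I expect the slickest identity to emerge from recognizing that $A - B$ restricted to $\{A \ge B\}$ has the law of the overshoot, and this overshoot can be read off from the Wiener–Hopf structure.

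Alternatively — and this is probably the intended and shortest path — I would avoid $Q_0$ altogether by noting $\p(\tau < e_q) = \ok(0,0)/\ok(q,0)$ from \eqref{eq:busy}, so it suffices to compute the conditional transform $\e(e^{-\theta U_{e_q}} \mid \tau < e_q)$ and show it equals $(\uk(q,\theta) - \uk(0,\theta))/\uk(q,\theta) \cdot \ok(q,0)/\ok(0,0)$. On the event $\tau < e_q$, the post-$\tau$ evolution and the fact that at time $\underline G_{e_q}$ the workload revisits $0$ suggest relating $U_{e_q} = -(Q_0 + \underline X_{e_q})$ to the "height below $0$" of the free process reflected appropriately; applying splitting at the infimum to the \emph{unreflected} path $Q_0 + X_t$ should give $U_{e_q}$ as (a conditioned version of) $-(Q_0 + X_{e_q}) + (X_{e_q} - \underline X_{e_q})$, again a product of independent factors whose transforms are $\uk$-factors and $\ok$-factors. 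The main obstacle I anticipate is bookkeeping the conditioning on $\{\tau < e_q\}$ correctly: one must be careful that "splitting at the infimum" interacts with the event $\{Q_0 \le -\underline X_{e_q}\}$ without spuriously coupling the two independent pieces, and that the normalizing factor $\ok(0,0)/\ok(q,0)$ comes out exactly rather than up to a constant. Matching against the spectrally one-sided special cases of \cite{DKM} would serve as the sanity check.
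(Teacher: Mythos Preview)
Your setup is correct: on $\{\tau<e_q\}$ one has $U_{e_q}=(-\underline X_{e_q})-Q_0$ with $Q_0$ and $\underline X_{e_q}$ independent, so the target is $\e\big(e^{-\theta(A-B)};A\ge B\big)$ for $A=-\underline X_{e_q}$, $B=Q_0$. But the proposal never crystallizes into a computation, and the obstacle you flag --- the ``positive exponent'' $e^{\theta Q_0}$ --- is real and is not resolved by any of the routes you sketch. The overshoot interpretation does not apply directly (neither $A$ nor $B$ is the ladder height of the other), and the conditional-transform route just re-expresses the same unknown.

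The paper's proof supplies the two ideas you are missing. First, instead of attacking $\e(e^{\theta B}\1{B\le A})$ head-on, use the complement: since $Q_0+\underline X_{e_q}$ equals $-U_{e_q}$ on $\{\tau<e_q\}$ and $\underline Q_{e_q}$ on $\{\tau>e_q\}$, one has the exact decomposition
\[
\e\big(e^{-\theta U_{e_q}};\tau<e_q\big)=\e\,e^{\theta(Q_0+\underline X_{e_q})}-\e\big(e^{\theta\underline Q_{e_q}};\tau>e_q\big),
\]
and the second term on the right is already known from \eqref{eq:minQ}. Second, to make the first term on the right meaningful one works with $\theta\in i\mathbb R$ (where $e^{\theta Q_0}$ is bounded), evaluates the product $\dfrac{\ok(0,0)}{\ok(0,-\theta)}\cdot\dfrac{\uk(q,0)}{\uk(q,\theta)}$ via \eqref{eq:sup}--\eqref{eq:inf}, and then simplifies the resulting combination using the Wiener--Hopf identity \eqref{eq:WH} to eliminate $\ok(\cdot,-\theta)$ in favour of $\uk(\cdot,\theta)$; only after this algebraic reduction does the clean $\uk$-only formula emerge. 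Analytic continuation then extends the identity to $\Re\theta\ge 0$. None of your proposed paths would produce the cancellation that turns a mixed $\ok/\uk$ expression into the stated $\uk$-ratio, because that cancellation hinges on \eqref{eq:WH}, which you do not invoke.
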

\begin{proof}
First we compute for $\theta=i\mathbb R$
\begin{align*}\e\left(e^{-\theta U_{e_q}};\tau<e_q\right)&=\e e^{\theta(Q_0+\underline X_{e_q})}-\e\left(e^{\theta \underline Q_{e_q}};\tau>e_q\right)\\
&=\frac{\ok(0,0)}{\ok(0,-\theta)}\frac{\uk(q,0)}{\uk(q,\theta)}-\frac{\ok(0,0)(\ok(q,-\theta)-\ok(0,-\theta))}{\ok(0,-\theta)\ok(q,0)},
\end{align*}
see~\eqref{eq:sup}, \eqref{eq:inf} and~\eqref{eq:minQ}. Using~\eqref{eq:kk} this can be rewritten as
\[\frac{\ok(0,0)}{\ok(q,0)\ok(0,-\theta)}\left(\frac{q}{\uk(q,\theta)}-\ok(q,-\theta)+\ok(0,-\theta)\right).\]
Finally, using~\eqref{eq:WH} we can express $\ok(q,-\theta)=(q-\psi(-i\theta))/\uk(0,\theta)$. Plugging this in yields that the expression in the previous display equals
\[
\frac{\ok(0,0)\uk(0,\theta)}{-\ok(q,0)\psi(-i\theta)}\left(\frac{\psi(-\i\theta)}{\uk(q,\theta)}-\frac{\psi(-i\theta)}{\uk(0,\theta)}\right),\]
which easily reduces to the expression in the statement.
Finally, analytic continuation shows that it is true for all $\theta\in\mathbb C$ with $\Re(\theta)\geq 0$.
\end{proof}

\begin{prop} For any $\theta, \alpha,\beta,\gamma, u,v, q\ge 0$,
\begin{eqnarray*}
\lefteqn{\e\left(e^{-\a D-\beta U_{e_q}-\gamma Q_{e_q}-u\tau -v(\underline G_{e_q}-\tau)-w(e_q-\underline G_{e_q})};\tau<e_q\right)}\\
&=&\frac{q}{q+u}\frac{\ok(0,0)(\uk(q+u,\a+\beta)-\uk(0,\a+\beta))}{\ok(q+w,\gamma)\uk(q+v,\beta)}.
\end{eqnarray*}
\end{prop}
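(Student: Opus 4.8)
The plan is to split the interval $[0,e_q]$ at the a.s.\ finite stopping time $\tau$ and to use that, on $\{\tau<e_q\}$, the workload restarts from $0$ at time $\tau$. Write $s:=e_q-\tau$ and $\tilde X_r:=X_{\tau+r}-X_\tau$ for $r\ge 0$. Since $X_t>-Q_0$ for $t<\tau$ while $X_\tau\le-Q_0$, the infimum of $X$ over $[0,\tau]$ equals $X_\tau$, so the pathwise identities valid on $\{\tau<e_q\}$ read $-X_\tau=Q_0+D$, $\underline X_{e_q}=X_\tau+\underline{\tilde X}_s$ and $Q_{\tau+r}=\tilde X_r-\underline{\tilde X}_r$; hence $U_{e_q}=D-\underline{\tilde X}_s$, $Q_{e_q}=\tilde X_s-\underline{\tilde X}_s$, and, identifying ``the last time the workload is at its minimum $0$ after $\tau$'' with ``the last time $\tilde X$ is at its running minimum'' exactly as in Section~\ref{sec:prelim}, also $\underline G_{e_q}-\tau=\underline G_s(\tilde X)$ and $e_q-\underline G_{e_q}=s-\underline G_s(\tilde X)$. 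Substituting these, the exponent in the statement becomes $-(\a+\beta)D-u\tau$ plus a functional of $(\tilde X,s)$ alone, namely $\beta\underline{\tilde X}_s-\gamma(\tilde X_s-\underline{\tilde X}_s)-v\underline G_s(\tilde X)-w(s-\underline G_s(\tilde X))$.

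Next I would condition on $\mathcal F_\tau$. By the strong Markov property $\tilde X$ is a fresh copy of $X$ independent of $\mathcal F_\tau$, and by lack of memory $\p(e_q>\tau\mid\mathcal F_\tau)=e^{-q\tau}$ with $s=e_q-\tau$, given $\{e_q>\tau\}$ and $\mathcal F_\tau$, exponential of rate $q$ and independent of $\tilde X$. Since $D,\tau$ are $\mathcal F_\tau$-measurable, the expectation in the statement factorizes as
\[\e\bigl(e^{-(q+u)\tau-(\a+\beta)D}\bigr)\,\cdot\,\e\bigl(e^{\beta\underline X_{e_q}-\gamma(X_{e_q}-\underline X_{e_q})-v\underline G_{e_q}-w(e_q-\underline G_{e_q})}\bigr),\]
with a fresh $X$ and $e_q$ in the second factor. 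The second factor is immediate: splitting at the minimum makes $(\underline G_{e_q},\underline X_{e_q})$ and $(e_q-\underline G_{e_q},X_{e_q}-\underline X_{e_q})$ independent, so by~\eqref{eq:inf} and~\eqref{eq:sup} it equals $\tfrac{\uk(q,0)}{\uk(q+v,\beta)}\tfrac{\ok(q,0)}{\ok(q+w,\gamma)}=\tfrac{q}{\uk(q+v,\beta)\ok(q+w,\gamma)}$, using~\eqref{eq:kk}.

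For the first factor I would not analyze the first passage of $X$ below $-Q_0$ directly, but rather bootstrap from Proposition~\ref{prop:U}. Running the above factorization in the special case $\a=\gamma=u=v=w=0$ gives $\e(e^{-\theta U_{e_q}};\tau<e_q)=\e(e^{-q\tau-\theta D})\cdot\tfrac{\uk(q,0)}{\uk(q,\theta)}$, so that, together with Proposition~\ref{prop:U} (which already incorporates the stationarity of $Q_0$) and~\eqref{eq:kk},
\[\e\bigl(e^{-q\tau-\theta D}\bigr)=\frac{\uk(q,\theta)}{\uk(q,0)}\,\e\bigl(e^{-\theta U_{e_q}};\tau<e_q\bigr)=\frac{\ok(0,0)\bigl(\uk(q,\theta)-\uk(0,\theta)\bigr)}{q}.\]
As $\tau$ and $D$ are constructed from $X$ and $Q_0$ only and do not involve $q$, this is an identity in the single parameter $q$ that appears as the exponential rate of $\tau$; it therefore remains valid with $q$ replaced by any $p>0$, and taking $p=q+u$, $\theta=\a+\beta$ yields $\e(e^{-(q+u)\tau-(\a+\beta)D})=\ok(0,0)(\uk(q+u,\a+\beta)-\uk(0,\a+\beta))/(q+u)$. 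Multiplying the two factors and collecting the prefactor $\tfrac{q}{q+u}$ gives the stated formula.

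The only genuinely delicate step is the pathwise bookkeeping in the first paragraph: one must check carefully that $\underline X_{e_q}=X_\tau+\underline{\tilde X}_s$ (which rests on $X$ staying strictly above $-Q_0$ on $[0,\tau)$), that the post-$\tau$ workload is exactly the reflected process driven by $\tilde X$ started from $0$, and that the five functionals reduce as claimed; together with the clean justification of the conditional independence at $\tau$ and the lack-of-memory splitting of $e_q$. Once these are secured the remaining manipulations are purely algebraic.
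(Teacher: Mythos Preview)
Your argument is correct and rests on the same ingredients as the paper's proof: the strong Markov property at $\tau$ combined with memorylessness of $e_q$, splitting at the infimum for the post-$\tau$ piece via~\eqref{eq:inf}--\eqref{eq:sup}, and Proposition~\ref{prop:U} to pin down the joint law of $(\tau,D)$. The organisational difference is that the paper first works out $\e(e^{-\a U_{e_q}-u\underline G_{e_q}};\tau<e_q)$ by passing through the change-of-rate identity~\eqref{eq:trick} (shifting weight between $\underline G_{e_q}$ and $e_q-\underline G_{e_q}$) and only then factors out $(\tau,D)$, whereas you factor at $\tau$ from the start and recover $\e e^{-(q+u)\tau-(\a+\b)D}$ by the direct observation that $\tau$ and $D$ are built from $(Q_0,X)$ alone, so the identity $\e e^{-q\tau-\theta D}=\ok(0,0)(\uk(q,\theta)-\uk(0,\theta))/q$ obtained from Proposition~\ref{prop:U} is valid for every $q>0$ and may simply be evaluated at $q+u$. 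This bypasses~\eqref{eq:trick} entirely and is a genuine streamlining; the paper's route, on the other hand, produces the intermediate transform $\e(e^{-\a U_{e_q}-u\underline G_{e_q}};\tau<e_q)$ as a by-product.
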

\begin{proof}
By splitting we have 
\[\e\left(e^{-\a U_{e_q}-u (e_q-\underline G_{e_q})};\tau<e_q\right)=
\e\left(e^{-\a U_{e_q}};\tau<e_q\right)\e e^{-u (e_q-\underline G_{e_q})}=
\frac{\ok(0,0)(\uk(q,\a)-\uk(0,\a))}{\uk(q,\a)\ok(q+u,0)},\]
see also~\eqref{eq:sup} and Proposition~\ref{prop:U}.
Moreover, using \eqref{eq:trick} we write
\[\e\left(e^{-\a U_{e_q}-u (e_q-\underline G_{e_q})};\tau<e_q\right)=
\frac{q}{q+u}\e\left(e^{-\a U_{e_{q+u}}+u T_{e_{q+u}}};\tau<e_{q+u}\right).\]
As before, changing parameters leads to
\begin{eqnarray*}\e\left(e^{-\a U_{e_q}-u \underline G_{e_q}};\tau<e_q\right)&=&
\frac{q}{q+u}\e\left(e^{-\a U_{e_{q+u}}+u(e_{q+u}-T_{e_{q+u}})};\tau<e_{q+u}\right)\\
&=&
\frac{q}{q+u}\frac{\ok(0,0)(\uk(q+u,\a)-\uk(0,\a))}{\uk(q+u,\a)\ok(q,0)}.
\end{eqnarray*}
Next, we apply the strong Markov property at the time~$\tau$ to obtain the identity
\[\e\left(e^{-\a U_{e_q}-u \underline G_{e_q}};\tau<e_q\right)=\e\left(e^{-\a D-u \tau};\tau<e_q\right)\e\left(e^{\a \underline X_{e_q}-u \underline G_{e_q}}\right),\]
which leads to
\[\e\left(e^{-\a D-u \tau};\tau<e_q\right)=
\frac{\ok(0,0)(\uk(q+u,\a)-\uk(0,\a))}{q+u}\]
using~\eqref{eq:inf} and~\eqref{eq:k}.
Finally, we write using the strong Markov property at $\tau$ and splitting at $\underline G_{e_q}$
\begin{align*}
&\e\left(e^{-\a D-\beta U_{e_q}-\gamma Q_{e_q}-u\tau -v(\underline G_{e_q}-\tau)-w(e_q-\underline G_{e_q})};\tau<e_q\right)\\
&=\e\left(e^{-(\a+\b) D-u\tau};\tau<e_q\right)\e e^{\beta \underline X_{e_q}-v\underline G_{e_q}}\e e^{-\gamma (X_{e_q}-\underline X_{e_q})-w(e_q-\underline G_{e_q})}\\
&=\frac{\ok(0,0)(\uk(q+u,\a+\b)-\uk(0,\a+\b))}{q+u}\frac{\uk(q,0)}{\uk(q+v,\b)}\frac{\ok(q,0)}{\ok(q+w,\gamma)},
\end{align*}
which in view of~\eqref{eq:kk} completes the proof.
\end{proof}

\section{Minimal workload in a queue conditioned to stay positive}
In this section we focus on the law of $\underline Q_{e_q}$ conditional on the queue not having idled between $0$ and $e_q$, i.e., $\tau>e_q$, and provide an alternative representation of this law in the limiting case when 
 $q\downarrow 0$. We also comment on the relation of this limit law to \levy processes conditioned to stay positive.
 
It follows directly from~\eqref{eq:busy} and~\eqref{eq:minQ}
that 
\begin{equation}\label{eq:pos}\e \left(e^{-\theta \underline Q_{e_q}}\,|\,\tau>e_q\right)=
\left.\e\left(e^{-\theta \underline Q_{e_q}};\tau>e_q\right)\right/\p\left(\tau>e_q\right)
=\frac{\ok(0,0)}{\ok(0,\theta)}\frac{\ok(q,\theta)-\ok(0,\theta)}{\ok(q,0)-\ok(0,0)}.\end{equation}
Note, however, that the corresponding conditional law does not have a direct link (via Laplace transform) to its transient counterpart, i.e.\ when $e_q$ is replaced by~$t$.

In the following we assume that $\ok'(0,0)<\infty$ (and so also $\ok'(0,\theta)<\infty$ for $\theta\geq 0$), where the derivative of $\ok(q,\theta)$ is taken with respect to~$q$. According to~\eqref{eq:busy} this requirement is equivalent to $\e \tau=\e\overline G<\infty$; see Proposition~\ref{prop:onesided} for an example when this assumption does not hold.
Now it follows from~\eqref{eq:pos} that 
\begin{equation}\label{eq:quasy}
\lim_{q\downarrow 0}\e \left(e^{-\theta \underline Q_{e_q}}\,|\,\tau>e_q\right)=\frac{\ok(0,0)}{\ok(0,\theta)}\frac{\ok'(0,\theta)}{\ok'(0,0)}=\frac{\log(\ok(0,\theta))'}{\log(\ok(0,0))'}.
\end{equation}
Along the same lines, one can establish the generalization
\[ \lim_{q\downarrow 0}
 \e \left(e^{-\theta \underline Q_{e_q}-\alpha Q_{e_q}}\,|\,\tau>e_q\right)=\frac{\ok(0,0)}{\ok(0,\alpha)}\frac{\log(\ok(0,\theta+\alpha))'}{\log(\ok(0,0))'}.
\]

\begin{remark}
It must be noted that the limit law of $\underline Q_{e_q}|\tau>e_q$ as $q\downarrow 0$ does not coincide with the so-called quasi-stationary distribution of the minimal workload, i.e.\ with that of $\underline Q_t|\tau>t$ as $t\rightarrow\infty$, even though $e_q\rightarrow\infty$ a.s. as $q\downarrow 0$.
Roughly speaking, conditioning on $\{\tau>e_q\}$ does not only force $\tau$ to be large, but also makes $e_q$ to appear smaller. In this respect it may be helpful to mention that
\[\lim_{q\downarrow 0}\e \left(e^{-\theta e_q}\,|\,\tau>e_q\right)=\frac{1-\e\,e^{-\theta\tau}}{\theta\,\e\tau},\]
i.e.\ the limit law of $e_q|\tau>e_q$ is a proper distribution, the residual life distribution associated to~$\tau$. 
For related results on the quasi-stationary behaviour of reflected one-sided \levy processes we refer to e.g.\ \cite{MPR}.
\end{remark}
\begin{remark}\label{rem:levy_conditioned}
Importantly, one way to construct a \levy process started in $x$ and conditioned to stay positive (in the usual sense) is to condition on $\{\tau>e_q\}$ and then let $q\downarrow 0$, see~\cite[Prop.\ 1]{chaumont_doney}. The distribution of the infimum of this process is characterized in~\cite[Thm.\ 1]{chaumont_doney}.
Note, however, that the limit distribution of $\underline Q_{e_q}|\tau>e_q$ can not be obtained from this result by integrating with respect to $\p(Q_0\in \D x)$.
The main reason is that
\[\e \e_{Q_0}(e^{-\a \underline Q_{e_q}}|\tau>e_q)=\e\frac{\e_{Q_0}(e^{-\a \underline Q_{e_q}};\tau>e_q)}{\p_{Q_0}(\tau>e_q)}\neq\frac{\e \e_{Q_0}(e^{-\a \underline Q_{e_q}};\tau>e_q)}{\e \p_{Q_0}(\tau>e_q)}=\e (e^{-\a \underline Q_{e_q}}|\tau>e_q),\]
because the event upon which we condition depends on $Q_0$.
\end{remark}

\begin{prop}
Assume that $\ok'(0,0)<\infty$. Then the limit laws of 
\[\underline Q_{e_q}|\tau>e_q,\quad\text{ and }\quad X_{e_q}|X_{e_q}>0\]
coincide as $q\downarrow 0$.
\end{prop}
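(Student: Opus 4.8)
The plan is to show that both limit laws have the same Laplace transform, and the natural candidate for that common transform is already available: from~\eqref{eq:quasy} the limit law of $\underline Q_{e_q}|\tau>e_q$ has transform $\log(\ok(0,\theta))'/\log(\ok(0,0))'$, where the derivatives are with respect to $\theta$. So the whole task reduces to computing $\lim_{q\downarrow 0}\e(e^{-\theta X_{e_q}}\,|\,X_{e_q}>0)$ and checking it equals this expression.

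First I would write, by Wiener--Hopf splitting at the infimum (Section~\ref{sec:prelim}), $X_{e_q}=\underline X_{e_q}+(X_{e_q}-\underline X_{e_q})$ with the two terms independent, so that conditioning on $\{X_{e_q}>0\}$ is awkward directly; instead I would express things through the factors. A cleaner route: the event $\{X_{e_q}>0\}$ on which $X_{e_q}=\overline X_{e_q}-(\overline X_{e_q}-X_{e_q})$ and split at the supremum. Actually the most economical approach is to use $\p(X_{e_q}>0)$ and $\e(e^{-\theta X_{e_q}};X_{e_q}>0)$ and rewrite both via the factorization $q/(q-\psi(\b))=[\uk(q,0)/\uk(q,i\b)][\ok(q,0)/\ok(q,-i\b)]$ from~\eqref{eq:WH}, then invert the $\theta\leftrightarrow i\b$ correspondence by analytic continuation as in the proof of Proposition~\ref{prop:U}. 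Concretely, I expect $\e(e^{-\theta X_{e_q}};X_{e_q}>0)$ and $\p(X_{e_q}>0)$ each to come out as a difference of two $\ok$-terms divided by an $\ok$-term (mirroring~\eqref{eq:minQ} and~\eqref{eq:pos}), because $\{X_{e_q}>0\}$ is, up to the strict/weak distinction irrelevant outside the compound Poisson case (last remark of Section~\ref{sec:prelim}), the event $\{\overline G_{e_q}>0\}$ that the walk has gone strictly positive, and the relevant renewal quantity is governed by the ascending ladder exponent $\ok$. Taking the ratio gives a conditional transform of the form $\ok(0,0)(\ok(q,\theta)-\ok(0,\theta))/[\ok(0,\theta)(\ok(q,0)-\ok(0,0))]$, i.e.\ exactly the right-hand side of~\eqref{eq:pos}; but that is precisely the finite-$q$ transform of $\underline Q_{e_q}|\tau>e_q$, so equality of the two limit laws follows immediately upon letting $q\downarrow 0$ under the assumption $\ok'(0,0)<\infty$.

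The key steps, in order: (i) express $\e(e^{-\theta X_{e_q}};X_{e_q}>0)$ in terms of $\ok,\uk$ using~\eqref{eq:inf}--\eqref{eq:sup} and~\eqref{eq:WH}, with analytic continuation from $\theta\in i\R$ to $\Re\theta\geq 0$ as in Proposition~\ref{prop:U}; (ii) specialise to $\theta=0$ to get $\p(X_{e_q}>0)$; (iii) divide and simplify (this is where the $\uk$-factors must cancel, leaving only $\ok$-terms); (iv) observe the resulting expression coincides identically, for every $q>0$, with the conditional transform in~\eqref{eq:pos}; (v) conclude by~\eqref{eq:quasy}. A subtle point worth stating explicitly is that the identity actually holds already at the level of finite $q$ --- the two conditional laws $\underline Q_{e_q}|\tau>e_q$ and $X_{e_q}|X_{e_q}>0$ have the same distribution for each fixed $q$, so the $q\downarrow 0$ limit is almost a formality; if that finite-$q$ coincidence is true it is the cleaner statement to prove and the proposition is then a corollary.

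The main obstacle I anticipate is step (iii): getting the algebra to collapse cleanly to a ratio involving only $\ok$. The $\uk$-terms enter through both the splitting of $X_{e_q}$ at its infimum and through~\eqref{eq:WH}, and one must use the scaling normalisation~\eqref{eq:kk}, namely $\ok(q,0)\uk(q,0)=q$, together with $\ok(q,i\b)$... sorry, $\uk(q,i\b)\ok(q,-i\b)=q-\psi(\b)$, to eliminate them; keeping track of the sign conventions ($\theta$ versus $-\i\theta$, strict ascending versus weak descending) is the error-prone part, exactly as in the proof of Proposition~\ref{prop:U}. A secondary (minor) obstacle is justifying the interchange of limit and expectation / analytic continuation at $q\downarrow 0$, but since $\ok'(0,0)<\infty$ is assumed and all quantities are bounded transforms, this is routine.
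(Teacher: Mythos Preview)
Your proposal contains a genuine gap. The central conjecture in step~(iv), that the conditional laws of $\underline Q_{e_q}\mid\tau>e_q$ and $X_{e_q}\mid X_{e_q}>0$ agree for \emph{every} fixed $q>0$, is false. A quick check in the spectrally negative case, where $\ok(\alpha,\beta)=\Phi(\alpha)+\beta$, gives (from~\eqref{eq:pos}) the transform $\Phi(0)/(\Phi(0)+\theta)$ for $\underline Q_{e_q}\mid\tau>e_q$, independent of~$q$, whereas $X_{e_q}\mid X_{e_q}>0$ has transform $\Phi(q)/(\Phi(q)+\theta)$; these differ for $q>0$. The heuristic reason is already in the paper's last paragraph of the section: writing $X_{e_q}=(X_{e_q}-\underline X_{e_q})+\underline X_{e_q}$, the first summand has the law of $\overline X_{e_q}$, not of $Q_0\stackrel{d}{=}\overline X_\infty$, so the match with $Q_0+\underline X_{e_q}$ only emerges in the limit $q\downarrow 0$. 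Consequently the hoped-for algebraic collapse in step~(iii) cannot produce the right-hand side of~\eqref{eq:pos}; the $\uk$-factors will not cancel the way you expect.

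Two further points. First, the derivative in~\eqref{eq:quasy} is with respect to the \emph{first} argument~$q$, not $\theta$, as stated just above~\eqref{eq:quasy}. Second, the paper's route bypasses Wiener--Hopf algebra entirely for the $X_{e_q}\mid X_{e_q}>0$ side: differentiating the explicit integral representation~\eqref{eq:k} in~$q$ gives directly
\[
q\,\partial_q\log\ok(q,\theta)=\int_0^\infty\!\!\int_{(0,\infty)}qe^{-qt-\theta x}\,\p(X_t\in\D x)\,\D t=\e\bigl(e^{-\theta X_{e_q}};X_{e_q}>0\bigr),
\]
from which the conditional transform is $(\log\ok(q,\theta))'/(\log\ok(q,0))'$, and letting $q\downarrow 0$ matches~\eqref{eq:quasy}. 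This is a two-line computation; your steps (i)--(iii) would not reach it.
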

\begin{proof}
Using~\eqref{eq:k} observe that 
\[q\log(\ok(q,\theta))'=\int_0^\infty\int_0^\infty qe^{-qt-\theta x}\p\left(X_t\in\D x\right)\D t=\e \left(e^{-\theta X_{e_q}};X_{e_q}>0\right).\]
Hence
\[
\frac{\log(\ok(0,\theta))'}{\log(\ok(0,0))'}=\lim_{q\downarrow 0}\frac{\log(\ok(q,\theta))'}{\log(\ok(q,0))'}=\lim_{q\downarrow 0}\frac{
\e \left(e^{-\theta X_{e_q}};X_{e_q}>0\right)
}{\p\left(X_{e_q}>0\right)}\\
=\lim_{q\downarrow 0}\e \left(e^{-\theta X_{e_q}}\,|\,X_{e_q}>0\right).\]
The proof is complete in view of~\eqref{eq:quasy}.
\end{proof}
This result has a simple intuitive explanation. 
Firstly, on the left hand side we have the limit of $Q_0+\underline X_{e_q}|Q_0+\underline X_{e_q}>0$. Secondly, it holds that
$X_{e_q}=(X_{e_q}-\underline X_{e_q})+\underline X_{e_q}$, where the two terms on the right are independent and the distribution of the first converges to that of $Q_0$ as $q\downarrow 0$.

Let us conclude by giving yet another representation of the limit law of $\underline Q_{e_q}|\tau>e_q$ in the spectrally one-sided cases.
\begin{prop}\label{prop:onesided}
Assume that $X$ is either spectrally positive or spectrally negative.
Then the limiting law of $\underline Q_{e_q}|\tau>e_q$ as $q\downarrow 0$ is the residual life distribution associated to $Q_0$, i.e.
\begin{equation}\label{sp}\lim_{q\downarrow 0}\e \left(e^{-\theta \underline Q_{e_q}}\,|\,\tau>e_q\right) = \frac{1-\e\, e^{-\theta Q_0}}{\theta\,{\mathbb E}Q_0}.\end{equation}
Moreover, $\ok'(0,0)=\infty$ if and only if $X$ is a spectrally positive process with ${\rm var}(X_1)=\infty$, in which case $\e Q_0=\infty$ and $\underline Q_{e_q}|\tau>e_q$ converges to $\infty$ as $q\downarrow 0$.
\end{prop}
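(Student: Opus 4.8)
The plan is to start from the explicit formula \eqref{eq:quasy}, namely that the limit transform equals $\log(\ok(0,\theta))'/\log(\ok(0,0))'$ where $'$ denotes $\partial/\partial q$ evaluated at $q=0$. For the two spectrally one-sided cases I would plug in the known closed forms of $\ok$. If $X$ is spectrally negative, the ascending ladder height is a pure (deterministic) drift, so $\ok(q,\theta)=\Phi(q)+\theta$ for a suitable scaling, where $\Phi$ is the right inverse of $\psi$ on the real line (adjusted to the normalisation \eqref{eq:kk}); then $\log(\ok(0,\theta))'=\Phi'(0)/(\Phi(0)+\theta)=\Phi'(0)/\theta$, and the ratio in \eqref{eq:quasy} telescopes to $(\Phi(0)+0)/(\Phi(0)+\theta)\cdot(\text{constant})$, which I will match against the residual-life transform $(1-\e e^{-\theta Q_0})/(\theta\,\e Q_0)$ using the fact that in the spectrally negative case $\overline X=Q_0$ is exponential. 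If $X$ is spectrally positive, the descending ladder height is the drift, so now it is $\uk$ that is linear in its second argument, and via the Wiener–Hopf identity \eqref{eq:WH} one gets $\ok(q,\theta)=(q-\psi(-\i\theta)\ \text{read as }\psi(\theta))/\uk(q,\theta)$ with $\uk$ affine; substituting this into \eqref{eq:quasy} and differentiating in $q$ at $0$ should again collapse to $(1-\e e^{-\theta Q_0})/(\theta\,\e Q_0)$. A cleaner route for the spectrally positive case: recall $\ok(0,\theta)$ is (up to constant) $\psi(\theta)/\text{(linear factor)}$ and that the stationary workload $Q_0=\overline X$ has transform $\e e^{-\theta Q_0}=\theta\,\psi'(0^-)/\psi(\theta)$ (the Pollaczek–Khinchine form, valid whenever $X$ is spectrally positive with negative mean); then $\log(\ok(0,\theta))'$ in $q$ will involve $\ok'(0,\theta)=\partial_q\ok(0,\theta)$, and the quotient is exactly the derivative-of-ratio identity underlying the residual-life distribution.

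For the second assertion I would argue as follows. By \eqref{eq:busy}, $\ok'(0,0)<\infty \iff \e\overline G=\e\tau<\infty$. In the spectrally negative case the ascending ladder process is a pure drift, so $\overline G$ (the time of the overall maximum) relates to the first passage of $X$ above its eventual supremum; here $\e\overline G<\infty$ always holds because $\Phi(0)>0$ and $\Phi'(0)<\infty$ (the Laplace exponent of a drift has a finite derivative). In the spectrally positive case, $\overline G$ is governed by the descending ladder time, and its mean is finite precisely when the descending ladder process has an integrable lifetime at level zero — which, through the standard identities, translates into the condition $\e X_1^2<\infty$ (equivalently $\mathrm{var}(X_1)<\infty$), since the only obstruction is a heavy second moment of the jump measure. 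Concretely, $\ok'(0,0)$ can be read off \eqref{eq:k} as $\int_0^\infty\int_{(0,\infty)} t\,e^{-?}\cdots$ — more usefully, differentiating the Frullani relation \eqref{eq:kk} gives $\ok'(0,0)\uk(0,0)+\ok(0,0)\uk'(0,0)=1$, and combining with $\psi'(0)=\e X_1\in(-\infty,0)$ and the Wiener–Hopf factorisation shows $\ok'(0,0)=\infty$ forces $\psi''(0)=\infty$, i.e. infinite variance, and that it is the spectrally positive side that carries this. When $\ok'(0,0)=\infty$ the limit in \eqref{eq:quasy} degenerates: the denominator blows up while $\ok'(0,\theta)$ stays finite for $\theta>0$, so the right-hand side of \eqref{eq:quasy} tends to $0$ for every $\theta>0$, which is exactly the transform of the point mass at $+\infty$; simultaneously $\e Q_0 = -\psi'(0)\cdot(\text{const})/\psi''(0)$-type reasoning (or directly Pollaczek–Khinchine with infinite jump variance) gives $\e Q_0=\infty$, consistently with reading \eqref{sp} as a vacuous $0/\infty$.

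The main obstacle I anticipate is bookkeeping the normalisation \eqref{eq:kk} correctly in the two one-sided cases: the "linear factor" multiplying $\psi$ in $\ok$ (or $\uk$) is only pinned down up to the scaling \eqref{eq:kk}, so I must track it through the $q$-derivative to get the constants $\Phi'(0)$, $\e Q_0$ to land in the right places; a sign or a stray $\Phi(0)$ there would break the match with the residual-life transform. A secondary subtlety is justifying $\e Q_0=-1/\psi'(0)\cdot(\cdots)$ and the equivalence "$\ok'(0,0)=\infty \iff \mathrm{var}(X_1)=\infty$ in the spectrally positive case" rigorously rather than heuristically — I would pin this down by differentiating \eqref{eq:WH} twice in $\theta$ at $\theta=0$ (legitimate when $\psi''(0)<\infty$) and by the monotone-convergence reading of the integral in \eqref{eq:k} when it is not. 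Everything else is substitution and simplification of the kind already carried out in the proof of Proposition~\ref{prop:U}.
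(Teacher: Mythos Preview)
Your proposal is correct and follows essentially the same route as the paper: start from \eqref{eq:quasy}, insert the explicit one-sided expressions for $\ok(\cdot,\cdot)$ (as in \cite[Sec.~6.5.2]{KYP}), and verify \eqref{sp} directly, then read off from those same explicit forms that $\ok'(0,0)$ is always finite in the spectrally negative case and that in the spectrally positive case $\ok'(0,0)=\infty\iff\phi''(0)=\infty\iff{\rm var}(X_1)=\infty$. Your anticipated obstacle about the normalisation \eqref{eq:kk} is real but harmless (the ratio in \eqref{eq:quasy} is scale-invariant), and your extra remarks about differentiating \eqref{eq:kk} or \eqref{eq:WH} are not needed once the explicit $\ok$ is in hand.
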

\begin{proof}
The identity~\eqref{sp} is easily verified using~\eqref{eq:quasy} and the explicit expressions for $\ok(\cdot,\cdot)$ in both cases, see, e.g.,~\cite[Sec.\ 6.5.2]{KYP}. 
These explicit expressions also show that in the spectrally negative case we must have $\ok'(0,0)<\infty$,
whereas in the spectrally positive case $\ok'(0,0)<\infty$ iff $\phi''(0)=\infty$, where $\phi(\theta)=\log\e e^{-\theta X_1}$. The latter is equivalent to ${\rm var}(X_1)=\infty$ and implies that~\eqref{sp} results in 0.
\end{proof}
\vspace{3mm}

It seems unlikely that~\eqref{sp} holds in general. It would be interesting to characterize all the \levy processes drifting to $-\infty$ for which~\eqref{sp} is true. 
Another challenging problem is to express the joint transform $\e e^{-
\a Q_0-\beta Q_{e_q}}$ (or, closely related, the joint transform $\e e^{-
\a Q_0-\beta \underline Q_{e_q}}$) through the functions $\ok(\cdot,\cdot)$ and $\uk(\cdot,\cdot)$.

{\small
}

\end{document}